\documentclass[11pt]{article}
\usepackage[margin=1.15in]{geometry}
\usepackage[T1]{fontenc}
\usepackage{lmodern}
\usepackage{amsmath,amssymb,amsthm}
\usepackage{parskip}
\usepackage{microtype}
\usepackage{hyperref}
\hypersetup{
  unicode=true,
  colorlinks=true,
  linkcolor=blue,
  citecolor=blue,
  urlcolor=blue,
  pdftitle={A Resolution of the de Bruijn--Erd\H{o}s Consecutive-Gap Problem},
  pdfauthor={Samuel Korsky}
}
\newtheorem{theorem}{Theorem}[section]
\newtheorem{lemma}[theorem]{Lemma}
\newtheorem{proposition}[theorem]{Proposition}
\theoremstyle{remark}
\newtheorem{remark}[theorem]{Remark}
\numberwithin{equation}{section}
\newcommand{\T}{\mathbb{T}}
\newcommand{\R}{\mathbb{R}}
\newcommand{\N}{\mathbb{N}}
\title{A Resolution of the de Bruijn--Erd\H{o}s Consecutive-Gap Problem}
\author{Samuel Korsky}
\date{September 8, 2026}
\begin{document}
\maketitle
\begin{abstract}
\noindent
Let $(x_n)_{n\geq1}$ be a sequence of distinct points on the unit circle.
An $r$-span is the total length of $r$ consecutive gaps determined by the
inserted points. Write $M_n^{(r)}$ and $m_n^{(r)}$ for the largest and smallest
$r$-spans after the first $n$ insertions. We prove that there is an absolute
constant $c>0$ such that, for every sufficiently large $r$,
\[
  \limsup_{n\to\infty}\bigl(nM_n^{(r)}-r\bigr)
  \geq c\sqrt{\log r},
  \qquad
  \limsup_{n\to\infty}\bigl(r-nm_n^{(r)}\bigr)
  \geq c\sqrt{\log r},
\]
and
\[
  \limsup_{n\to\infty}\frac{M_n^{(r)}}{m_n^{(r)}}
  \geq 1+\frac{\log r}{100r}.
\]
Thus all three asymptotic conjectures made by de Bruijn and Erd\H{o}s in
1949 are resolved. The ratio bound matches the upper bound of Cl\'ement and
Steinerberger up to an absolute constant and answers a question of
Brethouwer. The proofs compare interval counts at nearby times. Pointwise
control leads to a one-dimensional sequence-discrepancy argument for the
ratio, while averaged control and Hal\'asz's planar $L^1$ discrepancy theorem
give the two one-sided conclusions.
\end{abstract}
\section{Introduction}
Let $(x_n)_{n\geq1}$ be a sequence of distinct points on the circle
$\T=\R/\mathbb{Z}$. After the first $n$ points are inserted, write the gap
lengths in cyclic order as $g_1^{(n)},\ldots,g_n^{(n)}$. An \emph{$r$-span}
is the total length of $r$ consecutive gaps. For fixed $r\geq1$ and $n\geq r$,
set
\[
  M_n^{(r)}=\max_i\sum_{j=0}^{r-1}g_{i+j}^{(n)},
  \qquad
  m_n^{(r)}=\min_i\sum_{j=0}^{r-1}g_{i+j}^{(n)},
\]
with indices taken cyclically. The average $r$-span is $r/n$, since each
gap occurs in exactly $r$ of the $n$ spans. Throughout, $\log$ denotes the
natural logarithm.

For a sequence $X=(x_n)$, put
\[
  \overline A_r(X)=\limsup_{n\to\infty}nM_n^{(r)},
  \qquad
  \underline A_r(X)=\liminf_{n\to\infty}nm_n^{(r)},
\]
and
\[
  \mu_r(X)=\limsup_{n\to\infty}
  \frac{M_n^{(r)}}{m_n^{(r)}}.
\]
The optimal universal quantities are
\[
  \overline A_r=\inf_X\overline A_r(X),
  \qquad
  \underline A_r=\sup_X\underline A_r(X),
  \qquad
  \mu_r=\inf_X\mu_r(X),
\]
where $X$ ranges over all sequences of distinct points on $\T$.
The average-span identity gives $\overline A_r\geq r$ and
$\underline A_r\leq r$.

De Bruijn and Erd\H{o}s~\cite{debruijn-erdos} obtained universal bounds for
all three quantities. In the final mathematical paragraph of their paper
\cite[p.~17]{debruijn-erdos}, they conjectured that
\[
  \overline A_r-r\longrightarrow\infty,
  \qquad
  r-\underline A_r\longrightarrow\infty,
  \qquad
  r(\mu_r-1)\longrightarrow\infty
  \qquad(r\to\infty).
\]
This three-part conjecture is also recorded as Erd\H{o}s
Problem~\#1221~\cite{erdos-problem-1221}.

For the third assertion, de Bruijn and Erd\H{o}s proved
\[
  \mu_r\geq1+\frac1r.
\]
Thus their conjecture asks whether $1/r$ can be replaced by $f(r)/r$ for
some function $f(r)$ tending to infinity. This lower bound is sharp when
$r=1$.

Cl\'ement and Steinerberger~\cite{clement-steinerberger} proved that, for all
sufficiently large $r$,
\[
  \mu_r\leq1+\frac{C\log r}{r}
\]
with an absolute constant $C$. Their examples are the binary van der Corput
sequence and the sequence of multiples of the golden ratio, taken modulo
one. Brethouwer~\cite[Section~3.3.1, Question~3]{brethouwer} had asked whether
a lower bound of the same order holds.

Recent work has improved the ratio bounds for fixed $r$. The author proved the
lower bound $1+r/(r^2-1)$ for $r\geq2$ in~\cite{korsky}. The author's
unpublished manuscript~\cite{korsky-entropy} reports the entropy-potential
bound $1+\log_2(1+1/r)$; that bound is not used here. Bevan~\cite{bevan}
gives constructions and explicit upper bounds for small values of $r$.
Related questions with a prescribed finite number of insertions were
studied by DeLeo, Henderschedt, and Wells~\cite{deleo-henderschedt-wells}.

Our main result resolves all three asymptotic questions quantitatively.
\begin{theorem}\label{thm:main}
There are absolute constants $c>0$ and $r_0\in\N$ such that, for every
integer $r\geq r_0$
and every sequence of distinct points on $\T$,
\begin{align*}
  \limsup_{n\to\infty}\bigl(nM_n^{(r)}-r\bigr)
  &\geq c\sqrt{\log r},\\
  \limsup_{n\to\infty}\bigl(r-nm_n^{(r)}\bigr)
  &\geq c\sqrt{\log r},
\end{align*}
and
\[
  \limsup_{n\to\infty}\frac{M_n^{(r)}}{m_n^{(r)}}
  \geq1+\frac{\log r}{100r}.
\]
\end{theorem}
Consequently,
\[
  \overline A_r-r\geq c\sqrt{\log r},
  \qquad
  r-\underline A_r\geq c\sqrt{\log r},
  \qquad
  \mu_r-1\geq\frac{\log r}{100r}
\]
for all sufficiently large $r$. This proves the three conjectures above.
Together with the upper bound of Cl\'ement and Steinerberger, the last
inequality also gives
\[
  \frac{\log r}{100r}\leq\mu_r-1\leq\frac{C\log r}{r}
\]
for all sufficiently large $r$, determining the order of $\mu_r-1$ and
answering Brethouwer's question.

Some formulations use the average length of the $r$ gaps, namely
\[
  \widehat M_n^{(r)}=\frac{M_n^{(r)}}r,
  \qquad
  \widehat m_n^{(r)}=\frac{m_n^{(r)}}r.
\]
In that notation, the first two quantities in Theorem~\ref{thm:main} are
exactly
\[
  \limsup_{n\to\infty}r\bigl(n\widehat M_n^{(r)}-1\bigr),
  \qquad
  \limsup_{n\to\infty}r\bigl(1-n\widehat m_n^{(r)}\bigr).
\]
\paragraph{Proof outline.}
The three conclusions come from two versions of the same comparison idea.

For the ratio bound, suppose that all $r$-spans have nearly the same length
at every sufficiently late time. Moving a point forward by $kr$ places in
cyclic order then moves
it by approximately $kr/n$. We compare this move with a backward move at a
nearby time. Cyclic moves are bijections, and the point sets are nested, so
the resulting maps are injective. Averaging these comparisons gives bounds
for the number of points in shorter intervals.
The quantitative mechanism is as follows. If the normalized span error is
at most $A$, in the sense of~\eqref{eq:span-assumption}, we obtain counting
error at most
\[
  3A+O\!\left(\frac{A}{\log(r/A)}\right)
  \quad\text{on intervals of length at most }\frac Sn,
  \qquad
  S=\frac{\sqrt{Ar}}{\log^2(r/A)}.
\]
We choose one such interval containing $L=\lfloor S\rfloor$ points and list
these points in their order of insertion. The same bound controls every
prefix of the rescaled list. A quantitative
form of Schmidt's discrepancy theorem~\cite{schmidt}, due to
Larcher~\cite{larcher}, forces this error to be at least $(\log L)/16$.
In the relevant range $1\leq A\leq\log r$, we have
$\log L=\tfrac12\log r+O(\log\log r)$. Consequently,
\[
  (3+o(1))A\geq\left(\frac1{32}+o(1)\right)\log r.
\]
The final proof uses the strict inequality $1/96>1/100$.

For either one-sided assertion, the missing side is recovered after
averaging over the circle. At an integer time, the $r$-spans have mean
$r/n$. Thus a one-sided error at most $A/n$ implies
\[
  \sum_i\left|S_i(n)-\frac rn\right|\leq2A.
\]
The cyclic-walk comparison then gives spatial $L^1$, rather than pointwise,
control of short-interval counting error. We localize the points in a moving
short interval and record insertion time as a second coordinate. Hal\'asz's
planar $L^1$ discrepancy theorem~\cite{halasz} forces an error of order
$\sqrt{\log L}$. Taking $L$ of polynomial size in $r$ proves the first two
bounds in Theorem~\ref{thm:main}.
\section{Comparing Interval Counts}
Fix $r\in\N$. For real $t\geq1$, write
\[
  P_t=\{x_1,\ldots,x_{\lfloor t\rfloor}\},
  \qquad
  N_t(I)=\#(P_t\cap I).
\]
Real times let us choose the comparisons without repeated integer
rounding. The point sets are nested: $P_s\subseteq P_t$ whenever $s\leq t$.
We assume in this section and the next that, for every sufficiently large
$t$, there are $a_t,b_t\geq0$ such that every $r$-span $S_i(t)$ satisfies
\begin{equation}\label{eq:span-assumption}
  \frac{r-a_t}{t}\leq S_i(t)\leq\frac{r+b_t}{t},
  \qquad a_t+b_t\leq A,
\end{equation}
where $A\geq1$ is fixed. We derive this assumption from the ratio hypothesis
in Section~\ref{sec:ratio-proof}. Keeping the sum $a_t+b_t$ under control,
rather than
bounding the two errors separately by $A$, is what gives the coefficient
$3A$ below.
All intervals are oriented half-open circle intervals. We use lifts to
$\R$ when translating endpoints or measuring cyclic displacements. All
times are taken large enough that the intervals used below have length
less than one.
For $D>0$, define
\[
  U_t(D)=\frac1D\sup_{x\in\T}N_t((x,x+D/t]),
  \qquad
  V_t(D)=\frac1D\inf_{x\in\T}N_t((x,x+D/t]).
\]
Thus $U_t(D)$ and $V_t(D)$ compare the largest and smallest counts in
intervals of length $D/t$ with $D$. We write $(z)_+=\max\{z,0\}$.
\begin{lemma}\label{lem:comparison}
Fix $D,E>0$ and an integer $k\geq1$, and put $q=E/(kr)$. If $q<1$, then,
for all sufficiently large $t$,
\begin{align}
  U_t(D)
  &\leq
  \left(1+\frac{3kA}{D}\right)(1+q)\,
  U_{(1+q)t}(E),\label{eq:comparison-upper}\\
  V_t(D)
  &\geq
  \left(1-q-\frac{kA}{D}(3-q)\right)_+
  V_{(1-q)t}(E).\label{eq:comparison-lower}
\end{align}
For fixed $E$ and $k$, the time threshold can be chosen uniformly for $D$
in any bounded range.
\end{lemma}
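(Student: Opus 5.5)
The plan is to build, for each large $t$, an injective ``round trip'' map $P_t\to P_s$ that moves every point by almost exactly $E/s$. For the upper bound I take $s=(1+q)t$. For $x\in P_t$, let $F_t(x)$ be the point $kr$ places forward of $x$ in the cyclic order of $P_t$. The arc from $x$ to $F_t(x)$ is a union of $k$ consecutive $r$-spans, so by \eqref{eq:span-assumption} its length lies in $[k(r-a_t)/t,\;k(r+b_t)/t]$. Since $P_t\subseteq P_s$, the point $F_t(x)$ lies in $P_s$. Let $G_s(y)$ be the point $kr$ places backward of $y$ in $P_s$; the backward arc has length in $[k(r-a_s)/s,\;k(r+b_s)/s]$. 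Cyclic shifts are bijections of each finite set, so $\Phi=G_s\circ F_t\colon P_t\to P_s$ is injective. Its signed displacement is
\[
  \frac{kr}{t}-\frac{kr}{s}+\varepsilon(x)=\frac{kr\,q}{(1+q)t}+\varepsilon(x)=\frac{E}{s}+\varepsilon(x),
\]
where $-k(a_t/t+b_s/s)\le\varepsilon(x)\le k(b_t/t+a_s/s)$. Because $a+b\le A$ at both times, the error $\varepsilon$ ranges over a window of total width at most $2kA/t$. So $\Phi$ is nearly a rigid translation by $E/s$, up to a spread of $O(kA/t)$.

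Next I use $\Phi$ to compare counts. Fix an interval $I=(x_0,x_0+D/t]$ attaining (up to $\epsilon$) the supremum in $U_t(D)$. By injectivity, $N_t(I)$ is at most the number of points of $P_s$ in $\Phi(I\cap P_t)$. This image lies in the interval $I+E/s$ enlarged by the error window, whose length is at most $(D+2kA)/t$. Then I cover that enlarged interval by translates of a length-$E/s$ window and average over the translation parameter. Each translate contains at most $E\,U_s(E)$ points of $P_s$, so the average gives
\[
  N_t(I)\le \frac{(D+2kA)/t+E/s}{E/s}\,E\,U_s(E)\cdot(1+o(1)).
\]
The remaining task is to show that this is dominated by $(D+3kA)\,(1+q)\,U_s(E)$. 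The factor $1+q=s/t$ converts length $1/t$ into $s$ units of $1/s$. The extra $kA$ in $3kA$ should absorb the additive boundary term $E/s$, using $E=qkr\le kr$ together with the real-time slack; alternatively I would average $\Phi$ over the starting offset of $x_0$ so that the boundary term disappears. Dividing by $D$ then gives \eqref{eq:comparison-upper}.

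For \eqref{eq:comparison-lower} I run the same construction in reverse, with $s=(1-q)t$, so that $kr/s-kr/t=E/t\cdot\frac1{1-q}\cdot(1-q)$ adjusts appropriately. Starting from $P_s\subseteq P_t$, I move forward $kr$ places in $P_s$ (a distance of about $kr/s$) and then backward $kr$ places in $P_t$. Every length-$E/s$ window at time $s$ contains at least $E\,V_s(E)$ points. Injectivity lets me transfer these points into a shrunk copy of a given $D/t$ interval, shrunk by the error window; this is what produces the factor $(1-q-kA(3-q)/D)_+$, with the positive part covering the case where the shrink exceeds the interval. Uniformity of the time threshold for bounded $D$ is automatic, because every error term depends only on $k$, $E$ and $A$, and the threshold is only needed so that \eqref{eq:span-assumption} holds at both $t$ and $s$ and all intervals have length less than one.

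The main obstacle I expect is the exact bookkeeping that yields precisely the constants $3kA$ and $(3-q)$. In particular, the additive endpoint loss from covering an interval by length-$E/s$ windows must be absorbed without an extra term. I would first try to handle it by choosing the comparison windows aligned with the images of the extreme points of $I$, rather than by crude averaging.
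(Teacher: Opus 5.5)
Your framework is right: a forward move by $kr$ places, an inclusion into a later point set, a backward move, and injectivity from bijectivity plus nestedness. But the proposal has a genuine gap exactly where you flagged it, and none of your suggested repairs closes it.

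With a single intermediate time $s=(1+q)t$, the map $\Phi$ is essentially one fixed translation by $E/s$. It therefore gives only $N_t(I)\le N_s(J')$ for one interval $J'$ of length at most $(D+2kA)/t$. Covering $J'$ by windows of length $E/s$ costs an additive $E\,U_s(E)$, so you get $U_t(D)\le\bigl((1+2kA/D)(1+q)+E/D\bigr)U_s(E)$.
\begin{itemize}
\item Absorbing $E$ into the spare $kA(1+q)$ would need $E/k=qr\le A(1+q)$. This fails in the intended regime: in Proposition~\ref{prop:short} one has $E/k\ge K/2=\sqrt{Ar}/2\gg A$. When $E\approx 2D$ the resulting bound is useless.
\item Aligning the windows cannot help. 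Knowing only that every window of length $E/s$ holds at most $E\,U_s(E)$ points gives nothing better than $E\,U_s(E)$ for a shorter interval, since the points could all sit inside it.
\item Averaging over the offset $x_0$ fails because $U_t(D)$ is a supremum, and you need the bound for the extremal interval.
\item The lower bound has the same defect: covering $J'$ from inside loses $E\,V_s(E)$, giving only $(|J'|s-E)_+V_s(E)$.
\end{itemize}

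The missing idea is to let the intermediate time vary while the target time stays fixed. Put $t_+=(1+q)t$ and $\ell=E/t_+$. For each $0\le u\le\ell$, choose $s(u)\in[t,t_+]$ with $kr/t-kr/s=u$, and use
\[
  P_t\xrightarrow{F_t}P_t\hookrightarrow P_s\xrightarrow{B_s}P_s\hookrightarrow P_{t_+}.
\]
This is a one-parameter family of injections whose translations sweep exactly one window length. Since $a_s,b_s$ change with $u$ and are only individually bounded by $A$, enlarge $I$ by $k(a_t+A)/t$ on the left and $k(b_t+A)/t$ on the right. This gives a $u$-independent interval $J$ with $|J|\le(D+3kA)/t$. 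That uniformity in $s$, not an endpoint term, is the source of the third $kA$.

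Now $N_t(I)\le N_{t_+}(J+u)$ for every $u\in[0,\ell]$, and Fubini gives
\[
  \ell N_t(I)\le\int_0^\ell N_{t_+}(J+u)\,du=\int_J N_{t_+}((v,v+\ell])\,dv\le|J|\,E\,U_{t_+}(E),
\]
with no boundary loss. Dividing by $D\ell$ gives \eqref{eq:comparison-upper}.

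The lower bound is the mirror image. Take $s\in[t_-,t]$ with $t_-=(1-q)t$, and use the injection $P_{t_-}\hookrightarrow P_s\xrightarrow{B_s}P_s\hookrightarrow P_t\xrightarrow{F_t}P_t$. Shrink $I$ by $ka_t/t+kA/t_-$ on the left and $kb_t/t+kA/t_-$ on the right to get $J$. Then $\ell N_t(I)\ge|J|\,E\,V_{t_-}(E)$ with $\ell=E/t_-$, which yields the factor $\bigl(1-q-\tfrac{kA}{D}(3-q)\bigr)_+$.
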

\begin{proof}
For each time $s$, let $F_s$ and $B_s$ be the forward and backward cyclic
moves by $kr$ places in $P_s$. Both are bijections, and $B_s=F_s^{-1}$.
Summing $k$ consecutive $r$-spans shows that the displacement of $F_s$
lies in
\[
  \left[\frac{kr-ka_s}{s},\frac{kr+kb_s}{s}\right].
\]
The corresponding backward displacement is the negative of a forward
displacement at the same time. In particular, a backward move does not
reverse the insertion process.
\paragraph{The upper bound.}
Put $t_+=(1+q)t$ and $\ell=E/t_+$. For each $0\leq u\leq\ell$, choose
$s=s(u)$ so that
\[
  \frac{kr}{t}-\frac{kr}{s}=u.
\]
Then $t\leq s\leq t_+$. Consider the composition
\[
  P_t\xrightarrow{F_t}P_t
  \hookrightarrow P_s\xrightarrow{B_s}P_s
  \hookrightarrow P_{t_+}.
\]
Every arrow is injective. The displacement of the composition differs
from $u$ by a quantity in
\[
  \left[-\frac{ka_t}{t}-\frac{kb_s}{s},
         \frac{kb_t}{t}+\frac{ka_s}{s}\right]
  \subseteq\frac{k}{t}[-a_t-A,b_t+A].
\]
Let $I$ have length $D/t$. Extend $I$ to the left by $k(a_t+A)/t$ and to
the right by $k(b_t+A)/t$, obtaining an interval $J$ with
\[
  |J|=\frac{D+k(a_t+b_t+2A)}{t}\leq\frac{D+3kA}{t}.
\]
The injection sends $P_t\cap I$ into $P_{t_+}\cap(J+u)$. Hence
\[
  N_t(I)\leq N_{t_+}(J+u)
  \qquad(0\leq u\leq\ell).
\]
Averaging and interchanging the integrations gives
\[
  \begin{aligned}
    N_t(I)\ell
    &\leq\int_0^\ell N_{t_+}(J+u)\,du\\
    &=\int_J N_{t_+}((v,v+\ell])\,dv\\
    &\leq |J|\,E\,U_{t_+}(E).
  \end{aligned}
\]
The half-open endpoint conventions do not affect these integrals.
Since $\ell=E/t_+$, division by $D\ell$ proves
\eqref{eq:comparison-upper}.
\paragraph{The lower bound.}
Put $t_-=(1-q)t$ and $\ell=E/t_-$. For $0\leq u\leq\ell$, choose
$s=s(u)$ so that
\[
  \frac{kr}{s}-\frac{kr}{t}=u.
\]
Then $t_-\leq s\leq t$. This time use the injection
\[
  P_{t_-}\hookrightarrow P_s\xrightarrow{B_s}P_s
  \hookrightarrow P_t\xrightarrow{F_t}P_t.
\]
Its displacement differs from $-u$ by a quantity in
\[
  \left[-\frac{ka_t}{t}-\frac{kA}{t_-},
         \frac{kb_t}{t}+\frac{kA}{t_-}\right].
\]
Move the left endpoint of $I$ to the right by $ka_t/t+kA/t_-$ and its
right endpoint to the left by $kb_t/t+kA/t_-$. Call the resulting interval
$J$, interpreted as empty if its length is nonpositive. Then
\[
  |J|\geq
  \left(\frac Dt-\frac{kA}{t}-\frac{2kA}{t_-}\right)_+,
  \qquad
  N_{t_-}(J+u)\leq N_t(I).
\]
Averaging as before yields
\[
  N_t(I)\ell\geq |J|\,E\,V_{t_-}(E).
\]
After division by $D\ell$, the coefficient is at least
\[
  \left(\frac{t_-}{t}-\frac{kA}{D}
       \left(\frac{t_-}{t}+2\right)\right)_+
  =\left(1-q-\frac{kA}{D}(3-q)\right)_+.
\]
This proves~\eqref{eq:comparison-lower}.
Both constructions use only inclusions from an earlier point set into a
later one. Their comparison times depend on $E,k,t$, but not on $D$.
For $D$ in a bounded range, one time threshold also makes all the intervals
shorter than one. This proves the asserted uniformity.
\end{proof}
\section{Counts in Short Intervals}
We now iterate Lemma~\ref{lem:comparison}. We first transfer the span bounds
from scale $r$ to an intermediate scale $K=\sqrt{Ar}$. For a comparison
with $E$ between $D$ and $2D$, the two errors are of orders $D/(kr)$ and
$kA/D$. Balancing them suggests $k\approx D/K$, making each error of order
$\sqrt{A/r}$. There are $O(\log(r/A))$ such comparisons. One final
comparison then turns the relative error at scale $K$ into an additive
error of $3A$ plus a smaller term on shorter intervals.
All constants in this section are absolute.
\begin{proposition}\label{prop:short}
There are constants $C_0,C_1>0$ with the following property. Suppose that
\eqref{eq:span-assumption} holds for all sufficiently large $t$, with
$A\geq1$ and $r\geq C_0A$. Set
\[
  \Lambda=\log(r/A),
  \qquad
  S=\frac{\sqrt{Ar}}{\Lambda^2}.
\]
Then, for every sufficiently large $t$,
\begin{equation}\label{eq:short-count}
  \sup_{x\in\T}\sup_{0\leq D\leq S}
  \left|N_t((x,x+D/t])-D\right|
  \leq3A+\frac{C_1A}{\Lambda}.
\end{equation}
The time threshold may depend on $r,A$, and the point sequence, but not
on $x$ or $D$.
\end{proposition}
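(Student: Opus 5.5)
The plan is to iterate Lemma~\ref{lem:comparison} downward from scale $r$ (where the span hypothesis gives direct control) to the scale $K=\sqrt{Ar}$, and then make one final comparison from scale $K$ to every $D\le S$.

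\textbf{Base step.} By~\eqref{eq:span-assumption}, an interval of length $E_0/t$ with $E_0=r$ contains between $r-O(A)$ and $r+O(A)$ points. It is covered by, and covers, runs of $r$ consecutive gaps up to one extra span's worth of error. Hence
\[
U_t(r),\,V_t(r)=1+O(A/r)
\]
for all large $t$. The same reasoning with $kr$ points gives $U_t(kr),V_t(kr)=1+O(A/(kr))$, but we only need scale $r$.

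\textbf{Descending iteration.} Take scales $E_j=r2^{-j}$ for $j=0,1,\dots,J$, where $J\approx\log_2(r/K)=\tfrac12\log_2(r/A)=O(\Lambda)$. At step $j$ apply Lemma~\ref{lem:comparison} with $D=E_{j+1}$, $E=E_j$, and $k_j=\max\{1,\lfloor E_j/K\rfloor\}$, so that $q_j=E_j/(k_jr)\approx K/r=\sqrt{A/r}$.
\begin{itemize}
\item By~\eqref{eq:comparison-upper}, $U_t(E_{j+1})\le(1+3k_jA/E_{j+1})(1+q_j)\,U_{(1+q_j)t}(E_j)$. Each factor is $1+O(\sqrt{A/r})$.
\item By~\eqref{eq:comparison-lower}, $V_t(E_{j+1})\ge\bigl(1-O(\sqrt{A/r})\bigr)V_{(1-q_j)t}(E_j)$.
\end{itemize}
Since the inequalities hold at all sufficiently large times, and the time shifts are harmless, induction over $J=O(\Lambda)$ steps gives
\[
U_t(K'),\ V_t(K')=1+O\bigl(\Lambda\sqrt{A/r}\bigr)
\]
at a scale $K'\asymp K$. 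The base error $O(A/r)$ is smaller than this. The lemma's uniformity in bounded $D$ ranges lets us fix one time threshold.

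\textbf{Final step.} Apply the lemma once more with $E=K'$, arbitrary $D\le S$, and $k=1$. Then $q=K'/r\asymp\sqrt{A/r}$, and
\[
N_t(I)=DU\le(D+3A)(1+q)\bigl(1+O(\Lambda\sqrt{A/r})\bigr).
\]
Since $D\le S=\sqrt{Ar}/\Lambda^2$, the multiplicative error contributes
\[
D\cdot O\bigl(\Lambda\sqrt{A/r}\bigr)=O(A/\Lambda),
\]
which gives $N_t(I)-D\le3A+C_1A/\Lambda$. The lower bound is symmetric: with $(3-q)\le3$,
\[
N_t(I)\ge\bigl(D(1-q)-3A\bigr)\bigl(1-O(\Lambda\sqrt{A/r})\bigr)\ge D-3A-O(A/\Lambda).
\]
For the lower bound, the trivial bound $N\ge0$ also suffices when $D<3A$. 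Here $U_t(D)$ and $V_t(D)$ are sup and inf over $x$, so the conclusion is uniform in $x$. Uniformity in $D\in[0,S]$ comes from the bounded-range clause of the lemma.

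\textbf{Main obstacle.} The main difficulty is the bookkeeping in the iteration. The error must accumulate additively as $O(\Lambda\sqrt{A/r})$ rather than compounding badly. The rounding of $k_j$ at the top scales, where $E_j/K$ is large, and near $K$, where $k_j=1$, must not spoil $q_j,\ kA/D=O(\sqrt{A/r})$. The condition $r\ge C_0A$ ensures that $q<1$ and that $\Lambda$ is large enough for the final $A/\Lambda$ term to dominate the stray terms. Even if the intermediate error were $O(\Lambda^{2}\sqrt{A/r})$, the choice $S=\sqrt{Ar}/\Lambda^2$ would still leave an $O(A)$ error, so I expect $\Lambda^2$ in $S$ gives slack for $\Lambda$ being $\log$ base issues.
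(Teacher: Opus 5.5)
\textbf{Gap in the base step.} Your architecture matches the paper's: a doubling chain of applications of Lemma~\ref{lem:comparison} with $k\approx D/K$, down to $K=\sqrt{Ar}$, gives $U_t(K)\le1+O(\theta\Lambda)$ and $V_t(K)\ge1-O(\theta\Lambda)$ with $\theta=\sqrt{A/r}$. A final comparison with $E=K$, $k=1$ then finishes, and both the iteration and the final step are fine. The problem is the base step.

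The claim $U_t(r),V_t(r)=1+O(A/r)$ does not follow from \eqref{eq:span-assumption} at a single time. ``One extra span's worth of error'' can be up to $r$ points, not $O(A)$. Here is a counterexample. Take $t=n=2Mr$ with $r$ even, and make the gaps $r$-periodic on each half of the circle. Each period consists of $r/2$ gaps of negligible length $\varepsilon$ followed by $r/2$ equal gaps. The period sum is $(r-1)/n$ on the first half and $(r+1)/n$ on the second. Every $r$-span, including those straddling the two halves, lies in $[(r-1)/n,(r+1)/n]$, so \eqref{eq:span-assumption} holds with $A=2$.
\begin{itemize}
\item Take a window of length $r/n$ starting just before a tiny block in the first half. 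It contains that block, the $r/2-1$ following points, and the whole next tiny block, which begins only $(r-1)/n$ later. That is about $3r/2$ points.
\item Take a window of length $r/n$ starting at the last point of a tiny block in the second half. It contains only $r/2-1$ points.
\end{itemize}
So at scale $r$ neither $U_t$ nor $V_t$ is close to $1$.

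\textbf{What does hold, and the fix.} At a single time you do have the one-sided terminal bounds at shifted scales.
\begin{itemize}
\item $U_t(r-A)\le r/(r-A)$. Indeed, $r+1$ points in a half-open window of length $(r-A)/t$ would produce an $r$-span shorter than $(r-a_t)/t$.
\item $V_t(r+A)\ge r/(r+A)$. Indeed, a window of length $(r+A)/t$ contains the $r$ successors of the last point at or before its left endpoint.
\end{itemize}
Lemma~\ref{lem:comparison} passes upper bounds on $U$ and lower bounds on $V$ separately. You therefore need two scale chains: one descending from $r-A$ for $U$, and one descending from $r+A$ for $V$. With that correction your bookkeeping goes through unchanged and is exactly the paper's proof.

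Your fallback remark is also off. An intermediate error of $O(\Lambda^2\theta)$ would leave an extra $O(A)$ term, since $S\Lambda^2\theta=A$. That is not enough for the required bound $3A+C_1A/\Lambda$. You genuinely need the $O(\theta\Lambda)$ accumulation, which $(1+O(\theta))^{O(\Lambda)}$ provides once $\theta\Lambda$ is small.
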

\begin{proof}
Put
\[
  \theta=\sqrt{A/r},
  \qquad
  K=\sqrt{Ar}.
\]
We take $C_0$ sufficiently large throughout the proof; in particular,
$\theta\leq1/12$ and $K<r-A$.
\paragraph{The terminal bounds.}
An interval of length $(r-A)/t$ contains at most $r$ points. Indeed,
$r+1$ points in such a half-open interval would give an $r$-span of
strictly smaller length, contrary to~\eqref{eq:span-assumption}. Similarly,
an interval of length $(r+A)/t$ contains at least $r$ points: start at the
last point at or before its left endpoint and use the upper span bound.
Therefore
\begin{equation}\label{eq:terminal}
  U_t(r-A)\leq\frac r{r-A},
  \qquad
  V_t(r+A)\geq\frac r{r+A}
\end{equation}
for all sufficiently large $t$.
\paragraph{Transferring the terminal bounds to $K$.}
Construct a list of scales by starting at $K$ and doubling until reaching
$r-A$ for the upper estimate, and $r+A$ for the lower estimate. Shorten
the last step in each list to end at its specified terminal scale.
The estimates propagate from these known terminal bounds back down to
$K$. For two adjacent scales $D,E$, we have
\[
  K\leq D\leq E\leq2D.
\]
Choose $k=\lceil D/K\rceil$ in Lemma~\ref{lem:comparison}. Since
$k\leq2D/K$,
\[
  q=\frac{E}{kr}\leq2\theta,
  \qquad
  \frac{3kA}{D}\leq6\theta.
\]
The upper multiplier is at most
\[
  (1+6\theta)(1+2\theta)\leq1+9\theta,
\]
and the lower multiplier is at least $1-8\theta>0$.
Let $h_U$ and $h_V$ be the numbers of comparisons in the two lists. Both
are at most $C\Lambda$ for an absolute constant $C$. The terminal times
are obtained from $t$ by finite products of positive factors $1+q$ or
$1-q$. Thus, for sufficiently large $t$, every comparison and both
terminal bounds apply. Iteration gives the explicit estimates
\[
  U_t(K)\leq\frac r{r-A}(1+9\theta)^{h_U},
  \qquad
  V_t(K)\geq\frac r{r+A}(1-8\theta)^{h_V}.
\]
Since $A/r=\theta^2$ and $\theta\Lambda\to0$ as $r/A\to\infty$, these
imply
\begin{equation}\label{eq:middle}
  U_t(K)\leq1+O(\theta\Lambda),
  \qquad
  V_t(K)\geq1-O(\theta\Lambda)
\end{equation}
for all sufficiently large $t$. The implied constants are independent
of $r,A$, and the point sequence.
\paragraph{Transferring the bounds at $K$ to shorter intervals.}
Apply Lemma~\ref{lem:comparison} once more, with $E=K$ and $k=1$, so that
$q=\theta$. For $D>0$ and $I=(x,x+D/t]$, it gives
\begin{align*}
  N_t(I)&\leq
    (D+3A)(1+\theta)\,U_{(1+\theta)t}(K),\\
  N_t(I)&\geq
    \bigl(D(1-\theta)-(3-\theta)A\bigr)_+\,
    V_{(1-\theta)t}(K).
\end{align*}
Using~\eqref{eq:middle}, we obtain
\begin{equation}\label{eq:additive-error}
  |N_t(I)-D|
  \leq3A+O\bigl((D+A)\theta\Lambda\bigr).
\end{equation}
For the lower estimate, if the expression inside the positive part is
nonpositive, then $D\leq3A+O(A\theta)$ and the trivial bound $N_t(I)\geq0$
suffices. Otherwise the lower bound in~\eqref{eq:middle} applies directly.
The scale $S=K/\Lambda^2$ is chosen to make the remaining error
$O(A/\Lambda)$. Indeed, for $D\leq S$,
\[
  D\theta\Lambda\leq\frac A\Lambda,
  \qquad
  A\theta\Lambda=O(A/\Lambda),
\]
where the second estimate follows from $\theta\Lambda^2\to0$.
This proves~\eqref{eq:short-count} after fixing sufficiently large absolute
$C_0$ and $C_1$.
The scale lists use only finitely many predetermined comparison times,
and the last comparison uses $(1\pm\theta)t$, independently of $D$.
Consequently a single time threshold works for every $x$ and
$0<D\leq S$. The case $D=0$ is immediate.
\end{proof}
\section{A Finite Sequence in a Short Interval}
For a finite list $z_1,\ldots,z_L\in[0,1)$, define its maximum prefix
counting error by
\[
  H_L(z_1,\ldots,z_L)
  =\max_{1\leq j\leq L}\sup_{0\leq u\leq1}
   \left|\#\{i\leq j:z_i<u\}-ju\right|.
\]
We need a lower bound for every sufficiently long finite list, not merely
a statement about infinitely many prefixes of an infinite sequence.
\begin{theorem}[Finite-prefix discrepancy bound; Larcher]\label{thm:larcher}
There is an absolute integer $L_0$ such that, for every integer $L\geq L_0$
and every list $z_1,\ldots,z_L\in[0,1)$,
\[
  H_L(z_1,\ldots,z_L)\geq\frac1{16}\log L.
\]
\end{theorem}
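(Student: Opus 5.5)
The plan is to reduce the finite-prefix statement to a two-dimensional discrepancy bound of Schmidt type for a finite point set in the unit square. This is the standard route from Schmidt's sequence theorem; Larcher's contribution is to make the constant explicit and uniform in $L$.

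\textbf{Step 1: lifting to the square.} Given $z_1,\ldots,z_L$, form the planar set
\[
  Q=\{(z_i,\,i/L):1\leq i\leq L\}\subset[0,1)\times(0,1].
\]
For an anchored box $[0,u)\times[0,v)$, the number of points of $Q$ it contains is $\#\{i\leq j:z_i<u\}$ with $j=\lceil vL\rceil-1$. Hence the local discrepancy $\#(Q\cap[0,u)\times[0,v))-Luv$ differs from the prefix error $\#\{i\leq j:z_i<u\}-ju$ by at most $1$. It follows that the star discrepancy of $Q$ is at most $H_L+1$. So it suffices to show that every $L$-point set in the unit square has star discrepancy at least $\tfrac1{16}\log L+1$ once $L\geq L_0$.

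\textbf{Step 2: an explicit two-dimensional bound.} For the planar bound I would use Schmidt's theorem in the form
\[
  \sup_{u,v}\bigl|\#(Q\cap[0,u)\times[0,v))-Luv\bigr|\geq c\log L
\]
with an explicit constant. The most economical proof to make explicit is Hal\'asz's Riesz-product argument:
\begin{itemize}
\item take $n\approx\tfrac12\log_2 L$ dyadic scales, so that $2^n\approx 2L$;
\item for each $j$, build a Rademacher-type function $f_j$ adapted to the dyadic rectangles of shape $2^{-j}\times2^{-(n-j)}$, with $f_j$ vanishing on the rectangles that contain a point of $Q$;
\item form the Riesz product $\Phi=\prod_j(1+\gamma f_j)-1$ with a small parameter $\gamma$;
\item test the discrepancy function against $\Phi$, so that $\int D_Q\,\Phi\gtrsim\gamma n$ while $\int|\Phi|\leq2$.
\end{itemize}
Optimizing over $\gamma$ and tracking every constant should give a bound like $\tfrac1{8}\log_2 L-O(1)$. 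Since $\log_2 L=\log L/\log2$, this is comfortably more than $\tfrac1{16}\log L+1$ for large $L$. Alternatively, I would simply cite Larcher's explicit constant, $0.0646\log_2 L$ for finite planar sets. That constant is also $\geq\tfrac1{16}\log L$ after absorbing the $+1$ for large $L$.

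\textbf{Main obstacle.} The hard step is getting the constant. The qualitative $\Omega(\log L)$ is classical, but Hal\'asz's $L^1$ method gives only $\sqrt{\log L}$ in the plane. One needs Schmidt's sup-norm argument, or Hal\'asz's $L^\infty$ variant, with explicit bookkeeping. In particular, the expansion of the Riesz product requires checking that the mixed terms $f_{j_1}\cdots f_{j_m}$ for $m\geq2$ contribute nonnegatively, or negligibly, against the counting part. I would handle this with the standard observation that such products are again functions of the same adapted type at a finer shape, so their inner product with the counting part vanishes and their inner product with $Luv$ is tiny.

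The reduction in Step 1 is routine. Uniformity in $L$, as opposed to a statement about infinitely many prefixes of an infinite sequence, comes for free, because the lifted set $Q$ is built from the finite list alone.
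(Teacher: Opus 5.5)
\textbf{There is a genuine gap in Step 2.} Your Step 1 is correct: the lifted set $Q$ has star discrepancy at most $H_L+1$. But it only moves the whole difficulty into Step 2, and Step 2 does not resolve it. The only real content of this theorem is the explicit constant. You would need a planar bound exceeding $\frac1{16}\log L\approx0.043\log_2L$ for \emph{every} $L$-point set.

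\textbf{Why the Riesz-product argument falls short.} Halász's argument does not come close to that constant.
\begin{itemize}
\item An empty dyadic rectangle $R$ of area $2^{-n}$ contributes only $N|R|^2/16$ to $\langle D_Q,f_k\rangle$. Only about half of these rectangles are guaranteed empty, so the guaranteed gain per scale is at most $1/64$.
\item The mixed terms are not negligible in aggregate. They do pair to zero with the counting part. But summed over all index sets, their pairing with $Nxy$ can be as large as $(n+1)\gamma^2/\bigl(32(1-\gamma)\bigr)$. That is the same order in $n$ as the main term, which forces $\gamma$ to be a small constant.
\end{itemize}
Honest bookkeeping therefore gives a constant of order $10^{-3}$ (or smaller) per $\log_2L$, not $\frac18$. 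As a sanity check, $\frac18\log_2L\approx0.18\log L$ is nearly three times Larcher's constant $0.0646\log L$, so it cannot come out of routine constant-tracking.

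\textbf{The fallback citation is misstated.} Larcher's $0.0646$ multiplies the natural logarithm, not $\log_2$. It concerns an infinite sequence in $[0,1)$, and it asserts the bound only for infinitely many $N$. It is not a theorem about finite planar sets. Nor does the infinite-sequence statement by itself give a bound for every finite list, which is exactly the uniformity you need.

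\textbf{The missing idea.} Stay in one dimension and use the \emph{finite} form of Larcher's argument, as the paper does. Larcher's proof (his Section~3) starts from an arbitrary finite list of length $N=\lfloor a^h\rfloor$ with $3<a<4$. It bounds the maximal prefix error directly: $H_N\geq c_a\log N$, where $c_a=\frac{(a-2)(8a+3)}{16(1-2a)^2\log a}$. Choosing $a=7/2$ gives $c_a=\frac{31}{384\log(7/2)}>0.064>\frac1{16}$. For general $L$, restrict to the largest admissible $N\leq L$. Since $H_L$ is a maximum over prefixes, $H_L\geq H_N\geq c_a\log L-O(1)$. The strict margin over $\frac1{16}$ then absorbs the $O(1)$ for $L\geq L_0$.

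That margin is very thin, so there is no room for constant-factor losses. A planar bound with constant above $\frac1{16}$ for all $L$-point sets is at least as strong as the theorem itself, since transferring prefix bounds back to general planar sets costs a factor $2$. Your proposal supplies neither a proof nor a source for such a bound.
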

\begin{proof}[Derivation from Larcher's proof]
Section~3 of~\cite{larcher}, specifically pp.~12--13 of the preprint,
begins with a finite list of length $N=\lfloor a^h\rfloor$, where
$3<a<4$ and $h\in\N$. It proves
\[
  H_N\geq c_a\log N,
  \qquad
  c_a=\frac{(a-2)(8a+3)}{16(1-2a)^2\log a}.
\]
Take $a=7/2$. Then
\[
  c_a=\frac{31}{384\log(7/2)}>0.064>\frac1{16}.
\]
Given an arbitrary list of length $L$, restrict it to the largest such
$N\leq L$. Since $\log N=\log L-O_a(1)$,
\[
  H_L\geq H_N\geq c_a\log L-O_a(1).
\]
The additive constant is independent of the list. The strict margin
$c_a>1/16$ therefore gives the stated bound for all sufficiently large
$L$, with an absolute threshold.
\end{proof}
\begin{lemma}\label{lem:finite-sequence}
Suppose that $B\geq1$, $S\geq2$, and, for all sufficiently large integers
$n$,
\begin{equation}\label{eq:count-hypothesis}
  \left|N_n((x,x+D/n])-D\right|\leq B
  \qquad(x\in\T,\ 0\leq D\leq S).
\end{equation}
If $\lfloor S\rfloor\geq L_0$, then
\[
  B\geq\frac1{16}\log\lfloor S\rfloor.
\]
\end{lemma}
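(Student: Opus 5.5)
The plan is to pick an interval containing exactly $L=\lfloor S\rfloor$ points at a late time and list those points in their order of insertion. Every prefix of this list is then controlled by \eqref{eq:count-hypothesis} applied at earlier times, and Theorem~\ref{thm:larcher} finishes the argument.

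\textbf{Choosing the time and interval.} Fix a large integer $n$ beyond the threshold in \eqref{eq:count-hypothesis}, and put $L=\lfloor S\rfloor$. Take the $L$ consecutive points of $P_n$ starting at some point, and let $I=(x,x+D_0/n]$ be a half-open interval containing exactly these $L$ points, with the left endpoint just before the first one. It is cleaner to fix the length instead: let $I_n=(x,x+L/n]$ for a suitable $x$. Since the count $x\mapsto N_n((x,x+L/n])$ changes by $\pm1$ at jumps and averages to exactly $L$ over $x\in\T$, some $x$ gives $N_n(I_n)=L$. List the points of $P_n\cap I_n$ in insertion order as $y_1,\dots,y_L$, with insertion times $n_1<\dots<n_L\le n$, and rescale to $z_i=(y_i-x)n/L\in[0,1)$.

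\textbf{Controlling prefixes.} We need $|\#\{i\le j: z_i<u\}-ju|$ for every $j$ and $u$. The first $j$ points of the list are exactly $P_m\cap I_n$ for $m=n_j$, so the count is $N_m((x,x+uL/n])$. Apply \eqref{eq:count-hypothesis} at time $m$ with $D=uLm/n\le S$; this needs $m$ large, which we discuss below. It gives
\[
N_m\bigl((x,x+uL/n]\bigr)=uLm/n+O(B),
\]
and likewise $j=N_m(I_n)=Lm/n+\theta B$ with $|\theta|\le1$. Hence
\[
\#\{i\le j:z_i<u\}-ju=\bigl(uLm/n-u\,Lm/n\bigr)+(\text{error}_1)-u\,\theta B .
\]
This bound is $2B$, not $B$. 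To get the sharp constant, combine the two errors: the count over $(x,x+uL/n]$ and the count over its complement inside $I_n$, namely $(x+uL/n,x+L/n]$, both satisfy \eqref{eq:count-hypothesis}. The quantity $E=\#\{z_i<u\}-ju$ equals
\[
(1-u)\bigl(N_m(\text{left})-uLm/n\bigr)-u\bigl(N_m(\text{right})-(1-u)Lm/n\bigr),
\]
a convex combination of errors each of size at most $B$. So $|E|\le B$, giving $H_L\le B$, and Theorem~\ref{thm:larcher} yields $B\ge\frac1{16}\log L$.

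\textbf{Main obstacle: early prefixes.} The hypothesis only holds for $m$ beyond a threshold $n^*$, while $n_1$ might be small. I would first try to handle this by letting $n\to\infty$ with $L$ fixed. The $L$ points in $I_n$ are inserted at times $m$ with $N_m(I_n)\ge1$. If $m<n^*$, only $n^*$ points exist, and they occupy a fixed finite set. As $n\to\infty$ the interval shrinks, so for $n$ large we can choose $x$ so that $I_n$ (length $L/n$) avoids all of $P_{n^*}$. The averaging argument above picks $x$ with exact count $L$, and we need this compatible with avoidance. Since the set of $x$ with count $\ge L$ and the set with count $\le L$ together cover $\T$, while $P_{n^*}$ excludes only $O(n^*L/n)$ measure, we should be able to slide $x$ continuously through a region free of early points. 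This is where the care is needed. Then every $n_j>n^*$, all applications of \eqref{eq:count-hypothesis} are legitimate, and the constraint $D\le S$ holds because $Lm/n\le L\le S$.
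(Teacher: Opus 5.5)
You have the paper's core mechanism: list the $L$ window points in insertion order, identify the first $j$ of them with $P_{n_j}$ intersected with the window, bound the prefix error by $B$ via $E=(1-u)\bigl(N_m(\text{left})-u\lambda\bigr)-u\bigl(N_m(\text{right})-(1-u)\lambda\bigr)$ with $\lambda=Lm/n$, and then invoke Theorem~\ref{thm:larcher}. Your fixed-length window containing exactly $L$ points is a fine substitute for the paper's shortest $L$-span.

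\textbf{The gap.} It is the step you flagged and left open. You need an $x$ with $N_n((x,x+L/n])=L$ and $(x,x+L/n]\cap P_{n^*}=\emptyset$, and your sketch does not produce one. The admissible $x$ form $\T$ minus the pieces $[p-L/n,p)$, $p\in P_{n^*}$, which is up to $n^*$ disjoint arcs. So you cannot slide continuously from a region where the count is at least $L$ to one where it is at most $L$. The function $x\mapsto N_n((x,x+L/n])-L$ could be at least $1$ on some admissible arcs and at most $-1$ on the others, changing sign only inside the excluded pieces. The small measure of the excluded set does not rule this out, and you give no argument that the hypothesis does. As written, the early prefixes are not handled.

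\textbf{The fix.} You need not avoid early points, only limit them to one. Let $\delta$ be the minimum spacing of $P_{n^*}$. Once $L/n<\delta$, every half-open window of length $L/n$ contains at most one point of $P_{n^*}$. Then $n_j>n^*$ for every $j\geq2$, so your convex-combination bound applies to those prefixes. The one-point prefix has error $\sup_u\bigl|\mathbf 1[z_1<u]-u\bigr|\leq1\leq B$. This is exactly where the hypothesis $B\geq1$ is needed, which your proposal never uses. It is also how the paper finishes, using a span of length $\ell\leq L/N<\delta$.

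\textbf{A minor repair.} For the window $(x,x+L/n]$, your rescaled $z_i$ lie in $(0,1]$, not $[0,1)$. The set $\{x:N_n((x,x+L/n])=L\}$ is a finite union of intervals of positive length, so you can choose $x$ with $x+L/n\notin P_n$. Alternatively, shift the window slightly, as the paper does.
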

\begin{proof}
Put $L=\lfloor S\rfloor$. Choose $n_0\geq2$ so that
\eqref{eq:count-hypothesis} holds for all $n\geq n_0$, and let $\delta>0$
be the minimum circular distance between distinct points of $P_{n_0}$.
Choose an integer $N>\max\{L,n_0\}$ so large that $L/N<\delta$.
The average length of a cyclic $L$-span of $P_N$ is $L/N$. Hence there is
such a span of length $\ell\leq L/N$. Its half-open interval contains
exactly $L$ points. Translate it slightly forward, keeping those points
inside and placing both endpoints outside $P_N$. Denote the resulting
interval by $J=(a,a+\ell]$. Since $\ell<\delta$, the interval $J$ contains
at most one point of $P_{n_0}$.
List the $L$ points in $J$ in their order of insertion and rescale $J$ to
$(0,1)$, obtaining $z_1,\ldots,z_L$. Consider the first $j$ points, and
let $n\leq N$ be the insertion time of the $j$th one. If $n<n_0$, then
$j\leq1$, so this prefix has counting error at most $1\leq B$.
Now suppose $n\geq n_0$ and define
\[
  f(u)=N_n((a,a+u\ell])-n\ell u
  \qquad(0\leq u\leq1).
\]
Applying~\eqref{eq:count-hypothesis} to the two complementary subintervals
of $J$ gives
\[
  |f(u)|\leq B,
  \qquad
  |f(1)-f(u)|\leq B,
\]
because their lengths multiplied by $n$ are at most
$n\ell\leq N\ell\leq L\leq S$. Since $f(1)=j-n\ell$,
\[
  \begin{aligned}
    \#\{i\leq j:z_i\leq u\}-ju
    &=f(u)-uf(1)\\
    &=(1-u)f(u)-u\bigl(f(1)-f(u)\bigr).
  \end{aligned}
\]
Therefore
\[
  \left|\#\{i\leq j:z_i\leq u\}-ju\right|
  \leq(1-u)B+uB=B.
\]
This is why control of both complementary intervals preserves $B$,
rather than giving $2B$.
Every prefix of the rescaled list thus has counting error at most $B$.
Since the points lie in $(0,1)$, the conventions $z_i<u$ and $z_i\leq u$
give the same supremum by one-sided limits. Hence
$H_L(z_1,\ldots,z_L)\leq B$, and Theorem~\ref{thm:larcher} gives the
conclusion.
\end{proof}
\section{Proof of the Ratio Bound}\label{sec:ratio-proof}
\begin{proof}[Proof of the ratio assertion in Theorem~\ref{thm:main}]
Fix a sufficiently large $r$, and suppose for a contradiction that
\[
  \limsup_{n\to\infty}\frac{M_n^{(r)}}{m_n^{(r)}}
  <1+\frac{\log r}{100r}.
\]
Put $A=(\log r)/100$, so $A\geq1$ for sufficiently large $r$.
Choose $0<C<A$ such that
\[
  \frac{M_n^{(r)}}{m_n^{(r)}}\leq1+\frac Cr
\]
for all sufficiently large $n$. Suppress the superscript $(r)$ for the
rest of the proof. The average-span identity gives $nm_n\leq r\leq nM_n$,
and therefore
\[
  n(M_n-m_n)\leq C,
  \qquad
  nM_n\leq r+C.
\]
For $n=\lfloor t\rfloor$, define
\[
  a_t=r-nm_n,
  \qquad
  b_t=tM_n-r.
\]
Both are nonnegative, and
\[
  \frac{r-a_t}{t}=\frac nt\,m_n\leq m_n
  \leq M_n=\frac{r+b_t}{t}.
\]
Moreover,
\[
  a_t+b_t
  =n(M_n-m_n)+(t-n)M_n
  \leq C+\frac{r+C}{n}<A
\]
for all sufficiently large $t$. Thus~\eqref{eq:span-assumption} holds.
Since $r/A\to\infty$, Proposition~\ref{prop:short} applies for sufficiently
large $r$. It gives~\eqref{eq:count-hypothesis} with
\[
  S=\frac{\sqrt{Ar}}{\log^2(r/A)},
  \qquad
  B=3A+\frac{C_1A}{\log(r/A)}.
\]
With $A=(\log r)/100$, we have $S\to\infty$ and
\[
  B=\frac3{100}\log r+O(1),
  \qquad
  \log\lfloor S\rfloor
  =\frac12\log r-\frac32\log\log r+O(1),
\]
with absolute implied constants. Lemma~\ref{lem:finite-sequence} now gives
\[
  \frac3{100}\log r+O(1)
  \geq\frac1{16}\log\lfloor S\rfloor
  =\frac1{32}\log r-O(\log\log r).
\]
This is impossible for sufficiently large $r$, since $3/100<1/32$.
All constants governing the required size of $r$ are absolute, so the
threshold for $r$ is independent of the point sequence. This proves the
ratio assertion.
\end{proof}
\begin{remark}
The coefficient $1/100$ is chosen for simplicity; we have not optimized
the constant.
\end{remark}
\section{One-Sided Bounds in Spatial \texorpdfstring{$L^1$}{L1}}
The ratio argument used pointwise control of every $r$-span. A one-sided
bound does not give pointwise control in the opposite direction, but the
average-span identity supplies a useful substitute: it controls the total
deviation of all the spans. Averaging the walk comparison in space then
preserves this $L^1$ control.

Fix one of the following two eventual assumptions:
\begin{equation}\label{eq:one-sided-assumption}
  nM_n^{(r)}-r\leq A
  \qquad\text{or}\qquad
  r-nm_n^{(r)}\leq A,
\end{equation}
where $A\geq1$ is fixed and one fixed alternative holds for every
sufficiently large integer $n$. The same argument applies in both cases.

For $p\in P_t$, let $L_{t,k}(p)$ be the clockwise distance from $p$ to the
point $kr$ places after it in the cyclic order of $P_t$. Here $r$ and $k$
are fixed, and $t$ is sufficiently large that $kr<|P_t|$.
\begin{lemma}\label{lem:one-sided-span-l1}
Under either assumption in~\eqref{eq:one-sided-assumption},
\begin{equation}\label{eq:walk-l1}
  \sum_{p\in P_t}
  \left|L_{t,k}(p)-\frac{kr}{t}\right|
  \leq 2kA+\frac{kr}{t}.
\end{equation}
\end{lemma}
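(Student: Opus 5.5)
Write $n=\lfloor t\rfloor=|P_t|$, so that $P_t=P_n$. Label the points of $P_n$ cyclically and let $S_i=S_i(n)$ denote the $r$-span starting at the $i$-th point. For $p$ equal to the $i$-th point, $L_{t,k}(p)$ is the sum of $k$ consecutive $r$-spans, namely $L_{t,k}(p)=\sum_{j=0}^{k-1}S_{i+jr}$. The triangle inequality then gives
\[
  \sum_{p\in P_t}\left|L_{t,k}(p)-\frac{kr}{t}\right|
  \leq\sum_{i}\sum_{j=0}^{k-1}\left|S_{i+jr}-\frac rt\right|
  =k\sum_{i=1}^{n}\left|S_i-\frac rt\right|.
\]
The second equality holds because, as $i$ runs over all $n$ cyclic indices, each map $i\mapsto i+jr$ is a bijection. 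It therefore suffices to prove $\sum_i|S_i-r/t|\leq 2A+r/t$.

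Split this as $|S_i-r/t|\leq|S_i-r/n|+(r/n-r/t)$. The second term is nonnegative since $n\leq t$. Summed over the $n$ indices it gives $n r(t-n)/(nt)=r(t-n)/t\leq r/t$, which produces the additive term $kr/t$ after multiplication by $k$.

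For the first term, use the average-span identity $\sum_i S_i=r$: the deviations $S_i-r/n$ sum to zero. Hence
\[
  \sum_i|S_i-r/n|=2\sum_i(S_i-r/n)_+=2\sum_i(r/n-S_i)_+.
\]
Under the first alternative in~\eqref{eq:one-sided-assumption}, each $S_i\leq M_n\leq(r+A)/n$, so each positive part is at most $A/n$. Summed over the $n$ indices, this gives a total of at most $2A$. Under the second alternative, $S_i\geq m_n\geq(r-A)/n$, and the negative parts are bounded in the same way. This assumes $n$ is large enough that the eventual assumption holds and $kr<n$, which is the sense of ``sufficiently large $t$'' already in force.

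The main point requiring care is the passage from the real time $t$ to the integer $n$. One should center at $r/n$, where the mean identity is exact, and pay the rounding cost separately. Centering directly at $r/t$ would lose the cancellation. The other steps are routine bookkeeping: check that $L_{t,k}$ decomposes into exactly $k$ disjoint consecutive spans, which uses $kr<n$, and that the reindexing over $j$ is bijective.
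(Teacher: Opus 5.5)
Your proposal is correct and follows essentially the same route as the paper's proof. Both use the zero-sum identity at the integer time $n=\lfloor t\rfloor$ to get $\sum_i|S_i-r/n|\leq 2A$, pay the rounding cost $r(t-n)/t\leq r/t$, and decompose each $kr$-walk into $k$ shifted $r$-spans, applying the triangle inequality and summing over cyclic starting points. The only difference is that you apply the triangle inequality before the rounding step rather than after, which does not affect the argument.
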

\begin{proof}
First take an integer time $n$. If $S_1,\ldots,S_n$ are the $r$-spans,
then
\[
  \sum_{i=1}^n\left(S_i-\frac rn\right)=0.
\]
Under the first assumption, every summand is at most $A/n$, so the sum of
the positive summands is at most $A$. Under the second assumption, every
summand is at least $-A/n$, so the sum of the negative parts is at most
$A$. In either case, the zero-sum identity gives
\begin{equation}\label{eq:r-span-total-error}
  \sum_{i=1}^n\left|S_i-\frac rn\right|\leq2A.
\end{equation}
Now let $n=\lfloor t\rfloor$. Replacing $r/n$ by $r/t$ on the left of
\eqref{eq:r-span-total-error} costs at most
\[
  n\left|\frac rn-\frac rt\right|
  =\frac{r(t-n)}t=o_{t\to\infty}(1).
\]
A $kr$-span is a sum of $k$ suitably shifted $r$-spans. The triangle
inequality, followed by summation over the cyclic starting point, proves
\eqref{eq:walk-l1}. The same estimate is uniform when $t$ ranges over any
fixed multiplicative interval.
\end{proof}

For $D\geq0$, set
\[
  \Delta_t(x,D)=N_t((x,x+D/t])-D
\]
and define its positive spatial mass by
\[
  Z_t(D)=\int_{\T}\bigl(\Delta_t(x,D)\bigr)_+\,dx.
\]
At integer times the mean of $\Delta_t(\,\cdot\,,D)$ is zero, and hence
\begin{equation}\label{eq:q-equals-2z}
  \int_{\T}|\Delta_n(x,D)|\,dx=2Z_n(D).
\end{equation}
The next lemma is the averaged counterpart of
Lemma~\ref{lem:comparison}.

\begin{lemma}\label{lem:l1-comparison}
Assume~\eqref{eq:one-sided-assumption}. Fix $D,E>0$ and an integer
$k\geq1$, and put
\[
  q=\frac{E}{kr}.
\]
If $q<1$, then
\begin{equation}\label{eq:l1-comparison}
  Z_t(D)
  \leq \frac{(1+q)D}{E}\,Z_{(1+q)t}(E)
       +qD+8kA+\frac{4kr}{t}.
\end{equation}
\end{lemma}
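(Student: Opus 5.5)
The plan is to run the upper-bound half of Lemma~\ref{lem:comparison} again, but to keep track of the displacement error point by point instead of bounding it uniformly. The uniform bound is then replaced by the $L^1$ bound of Lemma~\ref{lem:one-sided-span-l1}.

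\textbf{Setup.} Put $t_+=(1+q)t$ and $\ell=E/t_+$. For $0\leq u\leq\ell$ choose $s=s(u)\in[t,t_+]$ with $kr/t-kr/s=u$. Use the same injection $\varphi_u=B_s\circ F_t$ from $P_t$ into $P_{t_+}$ as before. Write $e_\tau(p)=L_{\tau,k}(p)-kr/\tau$. The displacement of $p$ under $\varphi_u$ is then $u+e_t(p)-e_s(p')$, where $p'=B_s(\varphi_u(p))$ is the point $kr$ places behind $\varphi_u(p)$ in $P_s$. Its error relative to $u$ is therefore at most
\[
\varepsilon_u(p)=|e_t(p)|+|e_s(p')|.
\]
Since $p\mapsto p'$ is injective from $P_t$ into $P_s$, Lemma~\ref{lem:one-sided-span-l1} applied at times $t$ and $s$, together with $kr/s\leq kr/t$, gives
\[
\sum_{p\in P_t}\varepsilon_u(p)\leq 4kA+\frac{2kr}{t}.
\]

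\textbf{Pointwise inequality for fixed $x$.} Let $I_x=(x,x+D/t]$. Each $p\in P_t\cap I_x$ either lands in $I_x+u$ or lies within distance $\varepsilon_u(p)$ of an endpoint of $I_x$. Let $X_u(x)$ be the number of points of the second kind. Then
\[
N_t(I_x)\leq N_{t_+}(I_x+u)+X_u(x).
\]
Average over $u\in[0,\ell]$ and interchange integrals as in Lemma~\ref{lem:comparison}. The first term becomes
\[
\frac1\ell\int_{I_x}N_{t_+}((v,v+\ell])\,dv=\frac1\ell\int_{I_x}\bigl(E+\Delta_{t_+}(v,E)\bigr)\,dv .
\]
Since $|I_x|E/\ell=(1+q)D$, subtracting $D$ gives
\[
\Delta_t(x,D)\leq qD+\frac1\ell\int_{I_x}\bigl(\Delta_{t_+}(v,E)\bigr)_+dv+\frac1\ell\int_0^\ell X_u(x)\,du .
\]
The right side is nonnegative, so the same bound holds for $(\Delta_t(x,D))_+$.

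\textbf{Integration over $x$.} The middle term integrates to
\[
\frac{D/t}{\ell}\,Z_{t_+}(E)=\frac{(1+q)D}{E}\,Z_{(1+q)t}(E).
\]
For the exit term, a given $p$ is counted in $X_u(x)$ only for $x$ in a set of measure at most $2\varepsilon_u(p)$ (near either endpoint). Hence
\[
\int_\T X_u(x)\,dx\leq 2\sum_p\varepsilon_u(p)\leq 8kA+\frac{4kr}{t}
\]
for every $u$, and averaging in $u$ preserves this. Adding the three contributions, with $\int qD\,dx=qD$, gives \eqref{eq:l1-comparison}.

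\textbf{Main obstacle.} The delicate step is the bookkeeping of the backward error at the intermediate time $s$. The error $e_s$ is evaluated at a point determined by $\varphi_u(p)$, not at $p$ itself. So one must check that $p\mapsto p'$ is injective, so that Lemma~\ref{lem:one-sided-span-l1} still bounds the sum. One must also check that Lemma~\ref{lem:one-sided-span-l1} holds uniformly for $s$ in the multiplicative interval $[t,(1+q)t]$, which its proof notes. The remaining measure-theoretic points, namely endpoint conventions and Fubini, are as in Lemma~\ref{lem:comparison}.
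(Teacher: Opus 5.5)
Your proposal is correct and follows the paper's proof essentially step for step. It uses the same injection $B_s\circ F_t$ into $P_{(1+q)t}$, the same transport cost $4kA+2kr/t$ from Lemma~\ref{lem:one-sided-span-l1} at times $t$ and $s$, the same factor $2$ for points crossing an endpoint, and the same averaging in $u$ followed by Fubini.

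There is one small indexing slip. The backward error should be evaluated at $p'=\varphi_u(p)=B_s(F_t(p))$, the point $kr$ places behind $F_t(p)$ in $P_s$, not at $B_s(\varphi_u(p))$. Since $\varphi_u$ is injective, your summation argument goes through unchanged with this correction.
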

\begin{proof}
Put $t_+=(1+q)t$ and $\ell=E/t_+$. For $0\leq u\leq\ell$, choose
$s=s(u)$ so that
\[
  \frac{kr}{t}-\frac{kr}{s}=u.
\]
The identity $q=E/(kr)$ ensures that $t\leq s\leq t_+$. As in the proof
of Lemma~\ref{lem:comparison}, use the injection
\[
  T_u:\quad
  P_t\xrightarrow{F_t}P_t
  \hookrightarrow P_s\xrightarrow{B_s}P_s
  \hookrightarrow P_{t_+},
\]
where $F_t$ and $B_s$ move forward and backward by $kr$ places.
Choose compatible lifts to $\R$ and write
\[
  T_u(p)=p+u+\eta_u(p).
\]
Lemma~\ref{lem:one-sided-span-l1}, first at $t$ and then at $s$, gives
\begin{equation}\label{eq:transport-cost}
  \sum_{p\in P_t}|\eta_u(p)|
  \leq4kA+\frac{2kr}{t},
\end{equation}
uniformly for $0\leq u\leq\ell$. In the second use of the lemma, the
relevant predecessors $B_s(F_t(p))$ form a subset of $P_s$, so their
total contribution is no larger than the full sum.
Moving a single atom by circular distance $|\eta|$ changes the interval
membership function, in spatial $L^1$, by at most $2|\eta|$. Thus, if
$I_x=(x,x+D/t]$, injectivity of $T_u$ and
\eqref{eq:transport-cost} give
\[
  N_t(I_x)\leq N_{t_+}(I_x+u)+R_u(x),
  \qquad
  \int_{\T}R_u(x)\,dx\leq8kA+\frac{4kr}{t},
\]
where $R_u\geq0$. Average over $0\leq u\leq\ell$. Fubini's theorem gives
\[
  \frac1\ell\int_0^\ell N_{t_+}(I_x+u)\,du
  =\frac1\ell\int_{I_x}N_{t_+}((v,v+\ell])\,dv.
\]
Since $|I_x|E/\ell=(1+q)D$, it follows that
\[
  \Delta_t(x,D)
  \leq qD+\frac1\ell\int_{I_x}\Delta_{t_+}(v,E)\,dv+R(x),
\]
where $R\geq0$ and
$\int_{\T}R\leq8kA+4kr/t$. Taking positive parts and integrating in $x$
yields
\[
  Z_t(D)
  \leq qD+\frac{|I_x|}{\ell}Z_{t_+}(E)
       +8kA+\frac{4kr}{t}.
\]
Because $|I_x|/\ell=(1+q)D/E$, this is
\eqref{eq:l1-comparison}.
\end{proof}

There is also a simple terminal estimate at the original scale $r$.
\begin{lemma}\label{lem:l1-terminal}
Under~\eqref{eq:one-sided-assumption},
\[
  Z_t(r)\leq A
\]
for every sufficiently large $t$.
\end{lemma}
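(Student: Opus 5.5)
The plan is to compute $Z_t(r)$ gap by gap and compare it with the total span excess, in the spirit of the proof of Lemma~\ref{lem:one-sided-span-l1}. Write $n=\lfloor t\rfloor$, $\ell=r/t$, and let $p_1,\dots,p_n$ be the points of $P_t$ in cyclic order. For $x$ in the gap $(p_{i-1},p_i]$, the count $N_t((x,x+\ell])$ is the number of $m\geq0$ with $p_{i+m}\leq x+\ell$. The first step is the layer-cake formula
\[
  \bigl(\Delta_t(x,r)\bigr)_+=\sum_{j\geq1}\mathbf 1\bigl[N_t((x,x+\ell])\geq r+j\bigr].
\]
The event in the $j$th term says $p_{i+r+j-1}\leq x+\ell$. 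Integrating over the gap therefore bounds the contribution of that gap by a sum over $j$ of $\min\{g_i,(x_{\max}+\ell-p_{i+r+j-1})_+\}$-type terms, that is, by how much $\ell$ exceeds spans of length at least $r$ starting near $p_i$.

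The second step uses the integer-time mean identity to choose the convenient side. Since
\[
  \int_{\T}\Delta_t(x,r)\,dx=r\Bigl(\frac nt-1\Bigr)\leq0,
\]
we have $Z_t(r)\leq\int_{\T}(\Delta_t)_-$. So under the first alternative in~\eqref{eq:one-sided-assumption}, where spans are bounded above by $(r+A)/n$, I would bound the negative part instead. Its layer-cake expansion involves the events $p_{i+m}>x+\ell$ for $m<r$, and these are controlled by spans exceeding $\ell$. Under the second alternative, where spans are bounded below by $(r-A)/n$, I would bound the positive part directly. In either case the key point is a pointwise bound on the one-sided span error, used in the form $\sum_i(\ell-S_i)_+\lesssim A$ or $\sum_i(S_i-\ell)_+\lesssim A$. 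These sums are bounded by $A$ up to $o(1)$ by the zero-sum identity from the proof of Lemma~\ref{lem:one-sided-span-l1}, since the relevant one-sided part is at most $A$.

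The third step, which I expect to be the main obstacle, is the accounting that prevents the factor $r$ or the factor $2$ from entering. A naive bound charges each gap $r$ times the span excess, which gives $rA$. One must instead use that, for $x$ ranging over a single gap, the relevant points $p_{i+m}$ beyond $x+\ell$ (or before it) are few and are ordered. The layers $j$ (or $r-N\geq j$) then correspond to nested intervals of $x$ whose total length telescopes to the excess of a single $r$-span over $\ell$, for a suitably chosen starting point. Concretely, I would try to show that, for each starting index $i$, the union over $x$ of the contributions is dominated by $(S_{i}-r/t)_+$ (respectively $(r/t-S_i)_+$) times $n$-independent constants equal to one. Summing over $i$ then gives at most $A+o(1)$.

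The final step absorbs the $o(1)$ coming from $r/n$ versus $r/t$ by taking $t$ sufficiently large, together with the nonpositive mean term. If the exact constant $A$ proves elusive, I would settle first for $Z_t(r)\leq2A+o(1)$, which follows from the $L^1$ bound~\eqref{eq:r-span-total-error}. I would then sharpen it using the one-sidedness, which removes one of the two sides.
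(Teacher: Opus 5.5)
Your reduction to one-sided span sums is the right target, but the step that carries the whole lemma is not supplied. You never prove that the spatial mass of $(\Delta_t(\cdot,r))_\pm$ is controlled by the corresponding one-sided span sum, and you leave it as \emph{the main obstacle}. Grouping by the gap $(p_{i-1},p_i]$ containing $x$ is also what produces the overcounting you fear. The missing idea, which the paper uses, is to pair arcs with a common right endpoint. Up to a null set,
\[
  \Delta_t(x,r)=\sum_{i=1}^n\bigl(\mathbf 1_{(p_i-r/t,\,p_i]}(x)-\mathbf 1_{(p_{i-r},\,p_i]}(x)\bigr).
\]
This holds because the first indicators sum to $N_t((x,x+r/t])$ and the arcs $(p_{i-r},p_i]$ cover every point exactly $r$ times. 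Each summand is $\pm$ the indicator of a single interval of length $|S_{i-r}-r/t|$, with the sign of $r/t-S_{i-r}$. The paper uses this to get $\int_{\T}|\Delta_t(x,r)|\,dx\le 2A+r(t-n)/t$, and then halves it using the exact mean $-r(t-n)/t$. In your layer-cake language, you get the same identity by regrouping the layers according to the far point $p_m$ they record. For fixed $m$, the sets $\{x\in(p_{i-1},p_i]:x\ge p_m-\ell\}$ with $i\le m-r$ tile $[p_m-\ell,p_{m-r}]$, whose length is $(\ell-S_{m-r})_+$; the negative part works the same way. Your fallback $Z_t(r)\le 2A+o(1)$ needs this same link between $\Delta_t$ and the spans, so it is not free either.

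There is also a problem with the constant. In step~2 you replace $Z_t(r)$ by the upper bound $\int(\Delta_t)_-$. Under the first alternative this only yields $A+r(t-n)/t$, and letting $t\to\infty$ cannot remove a positive additive term from a bound claimed to be exactly $A$. You must keep the identity $Z_t(r)=\int(\Delta_t)_-+\int\Delta_t$, whose mean term $-r(t-n)/t$ cancels the rounding loss exactly. Alternatively, drop the case split. The zero-sum identity gives $\sum_i(r/n-S_i)_+=\sum_i(S_i-r/n)_+\le A$ under either alternative, and $r/t\le r/n$. So the pairing gives directly
\[
  Z_t(r)\le\sum_i\Bigl(\frac rt-S_i\Bigr)_+\le\sum_i\Bigl(\frac rn-S_i\Bigr)_+\le A .
\]
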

\begin{proof}
Write the points of $P_t$ in cyclic order as $y_1,\ldots,y_n$, where
$n=\lfloor t\rfloor$. Apart from endpoints,
\[
  N_t((x,x+r/t])
  =\sum_{i=1}^n\mathbf 1_{(y_i-r/t,y_i]}(x).
\]
On the other hand, the $r$-span arcs cover almost every point of the
circle exactly $r$ times, so
\[
  r=\sum_{i=1}^n\mathbf 1_{(y_{i-r},y_i]}(x).
\]
Pairing the arcs with the same right endpoint shows that
\[
  \int_{\T}|\Delta_t(x,r)|\,dx
  \leq\sum_{i=1}^n
  \left|S_{i-r}(t)-\frac rt\right|
  \leq2A+\frac{r(t-n)}t.
\]
Moreover,
\[
  \int_{\T}\Delta_t(x,r)\,dx
  =\frac{rn}{t}-r=-\frac{r(t-n)}t.
\]
The positive mass is half the sum of the $L^1$ norm and the mean.
Consequently $Z_t(r)\leq A$.
\end{proof}

We may now transfer the terminal estimate to much shorter intervals.
\begin{proposition}\label{prop:short-l1}
There are absolute constants $C_2,C_3>0$ with the following property.
Suppose that~\eqref{eq:one-sided-assumption} holds with $A\geq1$ and
$r\geq C_2A$. Put
\[
  \Lambda=\log(r/A),
  \qquad
  S=\frac{\sqrt{Ar}}{\Lambda^2}.
\]
Then, for all sufficiently large integers $n$,
\begin{equation}\label{eq:short-count-l1}
  \sup_{0\leq D\leq S}
  \int_{\T}\left|N_n((x,x+D/n])-D\right|\,dx
  \leq C_3A.
\end{equation}
The time threshold may depend on $r,A$, and the sequence, but not on $D$.
\end{proposition}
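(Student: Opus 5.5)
We mirror the proof of Proposition~\ref{prop:short}, with $Z_t(D)$ in place of the relative errors $U_t(D)-1$ and $1-V_t(D)$, using Lemma~\ref{lem:l1-comparison} for the comparisons and Lemma~\ref{lem:l1-terminal} as the terminal estimate. Put $\theta=\sqrt{A/r}$ and $K=\sqrt{Ar}$, and take $C_2$ large enough that $\theta$ is small and $K<r$.

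\emph{Step 1: scale $K$.} We build the scale list $K=D_0<D_1<\dots<D_h=r$ with $D_{j+1}\le 2D_j$ and $h=O(\Lambda)$. At each step we apply Lemma~\ref{lem:l1-comparison} with $D=D_j$, $E=D_{j+1}$, and $k=\lceil D_j/K\rceil\le 2D_j/K$, so that $q\le 2\theta$ and $(1+q)D/E\le 1+2\theta$. Each step adds the error
\[
qD_j+8kA+\frac{4kr}{t}\le 2\theta D_j+\frac{16AD_j}{K}+o(1)=O(\theta D_j)+o(1).
\]
Since the natural normalisation is $Z_t(D)/D$, we divide by $D_j$. Writing $z_j=Z_{t_j}(D_j)/D_j$, this gives
\[
z_j\le (1+q)z_{j+1}+O(\theta),
\]
with the intermediate times $t_j$ being fixed multiplicative shifts of $t$. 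The terminal value satisfies $z_h\le A/r=\theta^2$ by Lemma~\ref{lem:l1-terminal}, taken at a time $(1+O(\theta))^h t$, which is large when $t$ is large. Iterating $O(\Lambda)$ times with factors $(1+2\theta)^{O(\Lambda)}=O(1)$ (because $\theta\Lambda\to0$), we get
\[
\frac{Z_{t}(K)}{K}=O(\theta\Lambda),\qquad\text{so}\qquad Z_t(K)=O(A\Lambda)
\]
uniformly for $t$ in fixed multiplicative windows.

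\emph{Step 2: down to $D\le S$.} We apply Lemma~\ref{lem:l1-comparison} once more with $E=K$ and $k=1$, so $q=\theta$. This gives
\[
Z_t(D)\le (1+\theta)\frac{D}{K}Z_{(1+\theta)t}(K)+\theta D+8A+o(1)\le O\!\left(\frac{D\,A\Lambda}{K}\right)+\theta D+8A+o(1).
\]
For $D\le S=K/\Lambda^2$ we have $DA\Lambda/K\le A/\Lambda$ and $\theta D\le \theta K/\Lambda^2=A/\Lambda^2$. Hence $Z_t(D)\le 8A+O(A/\Lambda)+o(1)\le C A$. At integer times, \eqref{eq:q-equals-2z} converts this into $\int|\Delta_n(x,D)|\,dx=2Z_n(D)\le C_3A$. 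The time threshold is independent of $D$, because every comparison time is $t$ times a factor fixed in advance, namely $1+\theta$ for the last step and the fixed list factors for Step~1. The $o(1)$ terms $4kr/t$ are uniform once $t$ is large, since $k\le 2r/K$ is bounded in terms of $r$ and $A$.

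\emph{Main obstacle.} The delicate point is the bookkeeping in Step~1. The additive errors $8kA$ grow with $D$, so we must normalise by $D$ and make sure the errors sum to $O(\theta\Lambda)$ in relative terms rather than compounding multiplicatively. The choice $k\approx D/K$ makes each step's relative error $O(\theta)$, exactly as in the pointwise case. A second issue is that $D=0$ and very small $D$ must be handled; these are trivial because $Z_t(D)\le\int N_t((x,x+D/t])\,dx\le D\,n/t\le D$.
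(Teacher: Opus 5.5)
Your proposal is correct and follows the paper's proof essentially step for step. It uses the same doubling chain from $K=\sqrt{Ar}$ up to $r$ with $k=\lceil D/K\rceil$, the same recursion for $Z_t(D)/D$ terminated by Lemma~\ref{lem:l1-terminal}, and the same final comparison with $E=K$, $k=1$, followed by the identity~\eqref{eq:q-equals-2z} at integer times.
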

\begin{proof}
Write
\[
  \theta=\sqrt{A/r},
  \qquad
  K=\sqrt{Ar},
  \qquad
  z_t(D)=\frac{Z_t(D)}D \quad(D>0).
\]
Starting at $K$, double the scale until reaching $r$, shortening the last
step if necessary. Thus consecutive scales $D,E$ satisfy
\[
  K\leq D\leq E\leq2D,
\]
and the number of steps is $O(\Lambda)$. For such a pair choose
$k=\lceil D/K\rceil$ in Lemma~\ref{lem:l1-comparison}. Then
\[
  q=\frac{E}{kr}\leq2\theta,
  \qquad
  \frac{kA}{D}\leq2\theta.
\]
After division by $D$, the comparison lemma gives, on this finite scale
chain,
\begin{equation}\label{eq:z-recursion}
  z_t(D)\leq(1+C\theta)z_{(1+q)t}(E)+C\theta
            +\frac{4kr}{tD}
\end{equation}
with an absolute constant $C$. Lemma~\ref{lem:l1-terminal} gives
$z_t(r)\leq A/r=\theta^2$. Iterating~\eqref{eq:z-recursion} through the
$O(\Lambda)$ scales, and taking $C_2$ large enough that
$\theta\Lambda$ is small, yields
\begin{equation}\label{eq:z-middle}
  z_t(K)\leq C'\theta\Lambda
\end{equation}
for every sufficiently large $t$.
Finally apply Lemma~\ref{lem:l1-comparison} with $E=K$ and $k=1$. Here
$q=\theta$, so for $0<D\leq S$,
\[
  \begin{aligned}
  Z_t(D)
  &\leq \frac{(1+\theta)D}{K}Z_{(1+\theta)t}(K)
       +\theta D+8A+\frac{4r}{t}\\
  &\leq 8A+C'D\theta\Lambda+\frac{4r}{t}.
  \end{aligned}
\]
The definition of $S$ gives $D\theta\Lambda\leq A/\Lambda$. At an
integer time $n$, identity~\eqref{eq:q-equals-2z} now proves
\eqref{eq:short-count-l1}, after increasing the absolute constant $C_3$.
Indeed, we may take $n$ late enough that $4r/n\leq A$. The transport error
above is independent of $D$, so one late time works
uniformly for $0<D\leq S$; the case $D=0$ is immediate.
\end{proof}
\section{Localizing the Spatial Error}
To turn Proposition~\ref{prop:short-l1} into a contradiction, we use the
$L^1$ form of the two-dimensional discrepancy theorem. For a finite set
$\mathcal P\subset[0,1]^2$, write
\[
  D_{\mathcal P}(u,v)
  =\#\bigl(\mathcal P\cap((0,u]\times(0,v])\bigr)
   -|\mathcal P|uv.
\]
Endpoint conventions are immaterial in the following integral.

\begin{theorem}[Hal\'asz]\label{thm:halasz}
There is an absolute constant $c_H>0$ such that every set $\mathcal P$ of
$M\geq2$ points in $[0,1]^2$ satisfies
\[
  \int_0^1\int_0^1|D_{\mathcal P}(u,v)|\,du\,dv
  \geq c_H\sqrt{\log M}.
\]
\end{theorem}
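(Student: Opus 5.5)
The plan is Halász's Riesz-product refinement of Roth's orthogonal function method. Since the statement is the classical theorem of Hal\'asz, the paper may simply cite it, but the self-contained route is as follows. By duality, $\int|D_{\mathcal P}|\geq|\langle D_{\mathcal P},\Phi\rangle|/\|\Phi\|_\infty$. So it suffices to build a test function $\Phi$ on $[0,1]^2$ with $\|\Phi\|_\infty\leq C$ and $\langle D_{\mathcal P},\Phi\rangle\geq c\sqrt{\log M}$.

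\textbf{Step 1: the $r$-functions.} Fix $n$ with $2M\leq 2^n<4M$. For each $0\leq k\leq n$, consider the dyadic rectangles of shape $2^{-k}\times2^{-(n-k)}$. At least half of them contain no point of $\mathcal P$. On each such empty rectangle $R$, put the Haar-type function equal to $+1,-1,-1,+1$ on its four quadrants, and set it to zero on nonempty rectangles. Let $f_k$ be the sum of these over the empty rectangles, so $|f_k|\leq1$. The counting part of $D_{\mathcal P}$ integrates to zero against each empty-rectangle piece. The linear part $-Muv$ contributes $\gtrsim M|R|^2$ per rectangle, and there are $\gtrsim 2^n$ such rectangles. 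Hence
\[
  \langle D_{\mathcal P},f_k\rangle\geq c_0>0
\]
uniformly in $k$.

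\textbf{Step 2: the Riesz product.} Set
\[
  \Phi=\operatorname{Im}\prod_{k=0}^n\Bigl(1+\frac{i\gamma}{\sqrt n}f_k\Bigr)
\]
with a small absolute $\gamma>0$. Then $\|\Phi\|_\infty\leq(1+\gamma^2/n)^{(n+1)/2}\leq e^{\gamma^2}$. Expanding, $\Phi$ is a signed sum of products $f_{k_1}\cdots f_{k_j}$ over $j$ odd with distinct indices, weighted by $\pm(\gamma/\sqrt n)^j$. The $j=1$ part gives
\[
  \frac{\gamma}{\sqrt n}\sum_k\langle D_{\mathcal P},f_k\rangle\geq c_0\gamma\sqrt n.
\]
For $j\geq3$ I will use two facts.
\begin{enumerate}
\item[(a)] A product $f_{k_1}\cdots f_{k_j}$ is nonzero only where every factor lives on an empty rectangle. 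The product is again a sum of signed Haar-type pieces supported on intersections of empty rectangles, which are empty dyadic rectangles. So the counting part of $D_{\mathcal P}$ pairs to zero with it.
\item[(b)] For the linear part, $\int uv\,f_{k_1}\cdots f_{k_j}$ is small. Here one uses the product structure: in each coordinate the relevant factor of the product is a Haar function at the finest scale. This yields a bound of the shape $M\,4^{-n}2^{-(k_j-k_1)}\lesssim 2^{-n}2^{-(k_j-k_1)}$ per unit mass, i.e.\ geometric decay in the spread $k_j-k_1$.
\end{enumerate}

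\textbf{Step 3: summing the error terms.} Summing the bound from Step 2(b) over $j\geq3$ and over index sets with given extremes $k_1<k_j$, the middle indices can be chosen in at most $\binom{k_j-k_1}{j-2}$ ways. This gives a total error at most
\[
  \sum_{k_1}\sum_{m\geq1}2^{-m}\sum_{j\geq3}\Bigl(\frac{\gamma}{\sqrt n}\Bigr)^j\binom{m}{j-2}\lesssim n\cdot\frac{\gamma^3}{n^{3/2}}\sum_m 2^{-m}\Bigl(1+\frac{\gamma}{\sqrt n}\Bigr)^m\lesssim \gamma^3 n^{-1/2},
\]
which is negligible compared with $c_0\gamma\sqrt n$. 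Hence $\langle D_{\mathcal P},\Phi\rangle\geq\tfrac12 c_0\gamma\sqrt n$. Dividing by $e^{\gamma^2}$ gives $\int|D_{\mathcal P}|\geq c_H\sqrt{\log M}$, since $n\asymp\log M$. Small $M$ is absorbed into $c_H$ using the trivial lower bound $\int|D_{\mathcal P}|\gtrsim1$ for $M\geq2$.

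\textbf{Main obstacle.} I expect the hardest step to be the estimate in Step 2(b) for the linear term against products of $r$-functions. The exact computation of $\int uv\,f_{k_1}\cdots f_{k_j}$ requires tracking that, in each coordinate, only the finest Haar factor matters and the coarser ones are constant on its support. This is where the geometric decay must be extracted carefully enough that the combinatorial count of index sets does not swamp it. I would first verify the case $j=2$ by hand and then induct on $j$.
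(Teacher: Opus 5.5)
The paper does not prove this theorem. It quotes it from Hal\'asz~\cite{halasz} and uses it only as a black box in Lemma~\ref{lem:l1-localization}. Your proposal reconstructs Hal\'asz's own Riesz-product argument, and it is correct. The citation buys brevity; your route makes the result self-contained, with an explicit if poor $c_H$.

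The step you flag as the main obstacle needs no induction. For $k_1<\cdots<k_j$, the shape-$k_i$ rectangles $I_i\times J_i$ through a point satisfy $I_j\subsetneq\cdots\subsetneq I_1$ and $J_1\subsetneq\cdots\subsetneq J_j$. They therefore intersect in the dyadic rectangle $R'=I_j\times J_1$, of area $2^{-n-m}$ with $m=k_j-k_1$, and every coarser Haar factor is constant on $R'$. Hence $f_{k_1}\cdots f_{k_j}=\sum_{R'}c_{R'}h_{R'}$ with $c_{R'}\in\{0,\pm1\}$, and $c_{R'}\neq0$ only for empty $R'$. Since $|\langle uv,h_{R'}\rangle|=|R'|^2/16$ and there are $2^{n+m}$ such $R'$, the linear term is at most $M2^{-n-m}/16\leq2^{-m}/32$. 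This is the bound you actually use in Step~3, not the $M4^{-n}2^{-m}$ written in (b).

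Two other slips are harmless. First, $\sum_{j\geq3}x^j\binom{m}{j-2}=x^2\bigl((1+x)^m-1\bigr)\leq mx^3(1+x)^{m-1}$ carries an extra factor $m$, which the weight $2^{-m}$ absorbs. Second, the sign of $\langle D_{\mathcal P},f_k\rangle$ depends on how the quadrant pattern is oriented. This is immaterial, because replacing every $f_k$ by $-f_k$ only replaces $\Phi$ by $-\Phi$, and only $|\langle D_{\mathcal P},\Phi\rangle|$ enters the duality bound.
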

This is the unnormalized form of Hal\'asz's planar $L^1$ discrepancy
theorem~\cite{halasz}. We need the following direct consequence.

\begin{lemma}\label{lem:l1-localization}
There are absolute constants $c_4>0$ and $S_0$ with the following
property. Suppose $S\geq S_0$, $B\geq1$, and, for all sufficiently large
integers $n$,
\begin{equation}\label{eq:l1-count-hypothesis}
  \int_{\T}\left|N_n((x,x+D/n])-D\right|\,dx\leq B
  \qquad(0\leq D\leq S).
\end{equation}
Then
\[
  B\geq c_4\sqrt{\log S}.
\]
\end{lemma}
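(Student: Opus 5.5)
The plan is to repeat the localization of Lemma~\ref{lem:finite-sequence}, with two changes: the interval is averaged over its position $x\in\T$ rather than chosen, and the insertion time becomes a second coordinate, so that Theorem~\ref{thm:halasz} takes the place of Theorem~\ref{thm:larcher}. Put $L=\lfloor S\rfloor$. Choose $n_0$ so that \eqref{eq:l1-count-hypothesis} holds for all $n\geq n_0$, and then take a very large integer $N$. For each $x\in\T$ let $J_x=(x,x+L/N]$. Let $\mathcal P_x\subset[0,1]^2$ be the set of points $\bigl((y-x)N/L,\ i/N\bigr)$, where $y=x_i\in P_N\cap J_x$. Then $|\mathcal P_x|=N_N(J_x)=L+\Delta_N(x,L)$.

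The key identity comes from writing $n=\lfloor vN\rfloor$ and $D=uvL$. Up to the rounding $vN$ versus $n$, which is harmless because $N_t$ is defined through $\lfloor t\rfloor$ and we may work at real time $t=vN$, we get
\[
  D_{\mathcal P_x}(u,v)
  =N_{vN}\bigl((x,x+uL/N]\bigr)-uv|\mathcal P_x|
  =\Delta_{vN}(x,uvL)-uv\,\Delta_N(x,L).
\]
This holds because $(x,x+uL/N]=(x,x+D/(vN)]$. Since $D\leq L\leq S$, the hypothesis \eqref{eq:l1-count-hypothesis} applies whenever $vN\geq n_0$. Integrating $|D_{\mathcal P_x}|$ over $x$, $u$ and $v$ and using Fubini then gives
\[
  \int_{\T}\int_0^1\int_0^1|D_{\mathcal P_x}(u,v)|\,du\,dv\,dx\leq 2B+o_{N\to\infty}(1).
\]
The $o(1)$ term is the contribution of the range $v<n_0/N$. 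There $|D_{\mathcal P_x}|\leq \#(P_{n_0}\cap J_x)+|\mathcal P_x|$, and the $x$-average of the right side is $O(L+n_0L/N)$. Multiplying by the measure $n_0/N$ of the $v$-range gives a quantity that tends to $0$. If one insists on integer times, the real-time discrepancy differs from an integer-time one by at most a single atom over an $x$-set of measure $O(L/N)$, which is also negligible.

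Next comes the lower bound from Theorem~\ref{thm:halasz}, applied pointwise in $x$. The hypothesis at time $N$ with $D=L$ gives $\int|\Delta_N(x,L)|\,dx\leq B$, so the set of $x$ with $|\mathcal P_x|<L/2$ has measure at most $2B/L$. If $B\geq L/4$, the conclusion is trivial for large $S_0$, since $L/4\gg\sqrt{\log S}$. Otherwise the good set of $x$ has measure at least $1/2$. On it $|\mathcal P_x|\geq L/2\geq2$, and Theorem~\ref{thm:halasz} gives $\iint|D_{\mathcal P_x}|\geq c_H\sqrt{\log(L/2)}$. Hence $2B+o(1)\geq\tfrac12c_H\sqrt{\log(L/2)}$. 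Letting $N\to\infty$ and taking $S_0$ large yields $B\geq c_4\sqrt{\log S}$ with, say, $c_4=c_H/8$.

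I expect the main obstacle to be the bookkeeping in the key identity rather than any conceptual step. One issue is the mismatch between the normalization $|\mathcal P_x|uv$ used in Theorem~\ref{thm:halasz} and the expected count $uvL$; this is absorbed by the term $uv\,\Delta_N(x,L)$, which is exactly why the $D=L$ case of the hypothesis at time $N$ is needed. The other is the justification that non-integer times and the early-time range $v<n_0/N$ cost only $o(1)$ after averaging in $x$. Measurability of $x\mapsto\iint|D_{\mathcal P_x}|$ is routine, since $\mathcal P_x$ is piecewise constant in $x$ up to translation of the first coordinate.
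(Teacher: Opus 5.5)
Your proposal is correct and is essentially the paper's proof. It uses the same windows $(x,x+L/N]$ with insertion time $i/N$ as the second coordinate, the same split into the trivial case $B\geq L/4$ and a Markov set of window positions of measure at least $\tfrac12$, Hal\'asz's theorem applied pointwise in the window position, and the same disposal of the strip $v<n_0/N$. The differences are cosmetic:
\begin{itemize}
\item You put the normalization correction $uv\,\Delta_N(x,L)$ on the upper-bound side, which gives $2B$, or in fact $\tfrac54B$. The paper instead subtracts $\tfrac14|M_a-L|$ from Hal\'asz's bound.
\item The integer-time rounding is not literally ``a single atom''. It is either a constant shift of size at most $L/N$, as in the paper, or an $L^1$ cost below $L/N$ in your formulation. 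Either way it is harmless.
\end{itemize}
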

\begin{proof}
Put $L=\lfloor S\rfloor$, choose a large integer $N>L$, and set $w=L/N$.
For each $a\in\T$, let
\[
  M_a=N_N((a,a+w]).
\]
From~\eqref{eq:l1-count-hypothesis} with $n=N$ and $D=L$,
\begin{equation}\label{eq:ma-average}
  \int_{\T}|M_a-L|\,da\leq B.
\end{equation}
View the oriented arc $(a,a+w]$ as a copy of $(0,1]$. From every
$x_i\in P_N\cap(a,a+w]$, form the point
\[
  \left(\frac{x_i-a}{w},\frac{i}{N}\right)\in[0,1]^2,
\]
where the first coordinate is measured along the oriented arc. Denote
the resulting $M_a$-point set by $\mathcal P_a$, and put
\[
  G_a(u,v)
  =N_{\lfloor Nv\rfloor}((a,a+uw])-Luv.
\]
Apart from a null set of $(u,v)$ caused by endpoints,
\[
  D_{\mathcal P_a}(u,v)
  =G_a(u,v)+(L-M_a)uv.
\]
Theorem~\ref{thm:halasz} and
$\int_0^1\int_0^1uv\,du\,dv=1/4$ therefore give
\begin{equation}\label{eq:ga-lower}
  \|G_a\|_{L^1([0,1]^2)}
  \geq c_H\sqrt{\log M_a}-\frac14|M_a-L|
\end{equation}
whenever $M_a\geq2$.
If $B\geq L/4$, the conclusion is immediate for large $L$. Otherwise,
Markov's inequality and~\eqref{eq:ma-average} show that
\[
  \bigl|\{a:L/2\leq M_a\leq3L/2\}\bigr|\geq\frac12.
\]
Integrating~\eqref{eq:ga-lower} over this set gives
\begin{equation}\label{eq:integrated-lower}
  \int_{\T}\|G_a\|_1\,da
  \geq\frac{c_H}{2}\sqrt{\log(L/2)}-\frac B4.
\end{equation}
It remains to bound the same integral from above. Let $n_0$ be a threshold
for~\eqref{eq:l1-count-hypothesis}, and put
$n=\lfloor Nv\rfloor$. For $n\geq n_0$, set
$D=nuw\leq L\leq S$. Then
\[
  G_a(u,v)
  =\bigl(N_n((a,a+uw])-nuw\bigr)
    +Lu\left(\frac nN-v\right).
\]
The second term has absolute value at most $w$, while the integral in $a$
of the absolute value of the first is at most $B$ by
\eqref{eq:l1-count-hypothesis}. On the remaining strip
$0\leq v<n_0/N$, we have $|G_a(u,v)|\leq n_0+L$. Consequently,
\begin{equation}\label{eq:integrated-upper}
  \int_{\T}\|G_a\|_1\,da
  \leq B+\frac LN+\frac{n_0(n_0+L)}N
  =B+o_{N\to\infty}(1).
\end{equation}
Combining~\eqref{eq:integrated-lower} and
\eqref{eq:integrated-upper}, then letting $N\to\infty$, gives
\[
  \frac54B\geq\frac{c_H}{2}\sqrt{\log(L/2)}.
\]
Since $L=\lfloor S\rfloor$, this proves the lemma after adjusting the
absolute constants.
\end{proof}
\section{Proof of the One-Sided Bounds}
\begin{proof}[Proof of the one-sided assertions in
Theorem~\ref{thm:main}]
Let $C_3$ and $c_4$ be the constants in
Proposition~\ref{prop:short-l1} and Lemma~\ref{lem:l1-localization}, and
choose an absolute $c>0$ sufficiently small.
Suppose first, for a contradiction, that
\[
  \limsup_{n\to\infty}\bigl(nM_n^{(r)}-r\bigr)
  <c\sqrt{\log r}.
\]

Put
\[
  A=c\sqrt{\log r},
  \qquad
  S=\frac{\sqrt{Ar}}{\log^2(r/A)}.
\]
For sufficiently large $r$, we have $A\geq1$,
$r\geq C_2A$, and the first alternative in
\eqref{eq:one-sided-assumption} holds for every sufficiently large $n$.
For sufficiently large $r$, we also have $S\geq S_0$ and $C_3A\geq1$.
Proposition~\ref{prop:short-l1}, followed by
Lemma~\ref{lem:l1-localization}, gives
\begin{equation}\label{eq:final-one-sided-comparison}
  C_3A\geq c_4\sqrt{\log S}.
\end{equation}
For this choice of $A$,
\[
  \log S=\frac12\log r+O(\log\log r),
\]
and in particular $\log S\geq(\log r)/3$ for all sufficiently large $r$.
Inequality~\eqref{eq:final-one-sided-comparison} is impossible if, for
example, $c<c_4/(2C_3\sqrt3)$. This proves the first one-sided assertion.
If instead
\[
  \limsup_{n\to\infty}\bigl(r-nm_n^{(r)}\bigr)
  <c\sqrt{\log r},
\]

the identical argument uses the second alternative in
\eqref{eq:one-sided-assumption}. This proves the second assertion, with
the same absolute constants.
\end{proof}
\section*{Acknowledgments}
The author supplied the two main ideas behind the ratio argument:
using injective compositions of forward and backward cyclic walks to
bound point counts in short intervals, and applying a quantitative form
of Schmidt's sequence-discrepancy theorem to obtain a logarithmic lower
bound for the gap ratio. The second idea was motivated by the order of
the upper bound of Cl\'ement and Steinerberger.

The one-sided argument uses the spatial $L^1$ form of the same cyclic-walk
comparison, followed by a localization to planar discrepancy.

GPT Astra was used for the literature search that
identified Larcher's quantitative discrepancy bound and for completing
the mathematical argument from these ideas. It was also used to develop
and audit the $L^1$ transport and localization argument, to review
the proof, and to revise the exposition. The author independently checked the
arguments and calculations and
assumes full responsibility for all mathematical claims.

\end{document}